\newtheorem{theorem}{Theorem}
\newtheorem{lemma}[theorem]{Lemma}
\newtheorem*{theorem*}{Theorem}
\newtheorem*{corollary*}{Corollary}
\theoremstyle{definition}
\newtheorem*{remark*}{Closing Remark}
\newcommand{\mbz}{\mbox{$\mathbb{Z}$}}
\newcommand{\mbc}{\mbox{$\mathbb{C}$}}
\newcommand{\mbk}{\mbox{$\mathbb{K}$}}
\newcommand{\mfm}{\mbox{$\mathfrak{m}$}}
\newcommand{\mfn}{\mbox{$\mathfrak{n}$}}
\newcommand{\mfp}{\mbox{$\mathfrak{p}$}}
\newcommand{\grade}{\mbox{${\rm grade}$}}
\newcommand{\length}{\mbox{${\rm length}$}}
\newcommand{\rank}{\mbox{${\rm rank}$}}
\newcommand{\pd}{\mbox{${\rm proj}\,{\rm dim}\,$}}
\newcommand{\gldim}{\mbox{${\rm gl}\,{\rm dim}\,$}}
\newcommand{\wdim}{\mbox{${\rm w}.{\rm dim}\,$}}
\newcommand{\krulldim}{\mbox{${\rm Krull}\,{\rm dim}\,$}}
\newcommand{\Ext}{\mbox{${\rm Ext}$}}
\newcommand{\Ann}{\mbox{${\rm Ann}$}}
\newcommand{\Max}{\mbox{${\rm Max}$}}
\title[Noetherian rings of low global dimension]
{Noetherian rings of Low global dimension and syzygetic prime ideals}
\author{Francesc Planas-Vilanova}
\date{\today}
\subjclass[2010]{13A30,13D05,13D03,13H05,13H15}
\keywords{Global dimension, Noetherian regular rings, ideal of linear
  type, syzygetic ideal.\\ This work is partially supported by the
  Catalan grant 2014 SGR-634}% and grant MTM2015-69135-P}
\begin{document}

\begin{abstract}
Let $R$ be a Noetherian ring. We prove that $R$ has global dimension
at most two if, and only if, every prime ideal of $R$ is of linear
type. Similarly, we show that $R$ has global dimension at most three
if, and only if, every prime ideal of $R$ is syzygetic. As a
consequence, one derives a characterization of these rings using the
Andr\'e-Quillen homology.
\end{abstract}

\maketitle 

Let $R$ be a commutative ring. An ideal $I$ of $R$ is said to be of
linear type if the graded surjective morphism $\alpha:{\bf S}(I)\to
{\bf R}(I)$, from the symmetric algebra of $I$ to the Rees algebra of
$I$, is an isomorphism; $I$ is said to be syzygetic if the second
component $\alpha_2:{\bf S}_2(I)\to I^2$ is an isomorphism. It is
known that $R$ has weak dimension at most one if, and only if, every
ideal of $R$ is of linear type, and equivalently if, and only if,
every ideal of $R$ is syzygetic. In particular, rings of weak
dimension at most one are characterized in terms of the
Andr\'e-Quillen homology (see \cite{planasPams}).

Recall that the weak dimension of a ring $R$, denoted by $\wdim(R)$,
is the supremum of the flat dimensions of all $R$-modules and that the
global dimension of $R$, denoted by $\gldim(R)$, is the supremum of
the projective dimensions of all $R$-modules. Clearly
$\wdim(R)\leq\gldim(R)$, and when $R$ is Noetherian, they
agree. Since $\gldim(R)=\sup\{\gldim(R_{\mathfrak{m}})\mid
\mfm\in\Max(R)\}$, then, for a Noetherian ring $R$, $\gldim(R)\leq N$
is equivalent to $R_{\mathfrak{m}}$ being regular local with
$\krulldim(R_{\mathfrak{m}})\leq N$, for every maximal ideal $\mfm$ of
$R$ (see, e.g., \cite{lam}).

The purpose of this note is to extend these characterizations of rings
of $\wdim(R)\leq 1$ to rings of global dimension at most two and
three, but now in the Noetherian context. This is done in quite
similar terms. Concretely, we prove the theorem below. Item~(A), shown
in general in \cite{planasPams}, is included here just for the sake of
completeness.

\begin{theorem*}
Let $R$ be a Noetherian ring.
\begin{itemize}
\item[{(A)}:] $\gldim(R)\leq 1\Leftrightarrow$ every ideal of $R$ is
  of linear type $\Leftrightarrow $ every ideal of $R$ is syzygetic.
\item[{(B)}:] $\gldim(R)\leq 2\Leftrightarrow$ every prime ideal of
  $R$ is of linear type.
\item[{(C)}:] $\gldim(R)\leq 3\Leftrightarrow$ every prime ideal of
  $R$ is syzygetic.
\end{itemize}
\end{theorem*}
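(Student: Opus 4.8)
The plan is to reduce everything to the local situation and then read off the global dimension from the behaviour of primes in regular local rings. Both ``of linear type'' and ``syzygetic'' are local properties, since $\alpha$ and $\alpha_2$ localize and an isomorphism of finitely generated algebras (resp.\ modules) may be checked at each maximal ideal; and $\gldim(R)=\sup_{\mfm}\gldim(R_\mfm)$. Moreover, for primes $P\subseteq\mfq$ the localization $P_\mfq$ is a prime of $R_\mfq$ (and $P_\mfq=R_\mfq$ otherwise), every prime of $R_\mfq$ arising in this way. Hence the hypotheses ``every prime is of linear type / syzygetic'' and the conclusions on $\gldim$ all reduce to statements about a single Noetherian local ring $(R,\mfm)$, in fact to the primes of the regular local rings $R_\mfq$.

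The forward implications I would handle through the structure of primes in a regular local ring $R$ of small dimension. If $\krulldim R\le 2$, every prime is $(0)$, a height-one prime (principal, since $R$ is a UFD), or $\mfm$ (generated by a regular sequence); all three are of linear type, giving (B). If $\krulldim R\le 3$, the only new primes have height two; such a $P$ has $R/P$ a one-dimensional domain, hence Cohen--Macaulay, so by Auslander--Buchsbaum $\pd_R P=1$. I would then invoke the homological criterion that an ideal of projective dimension at most one is syzygetic (the approximation complex $\mathcal Z_\bullet$ of such an ideal is acyclic, forcing $\alpha_2$ to be injective); together with the regular sequence $\mfm$ this proves (C). Part (A) is the Noetherian specialisation of the already cited result.

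For the converse directions I would argue by contraposition, producing a prime that fails the property whenever $\gldim(R)>2$ (resp.\ $>3$). The cleanest ingredient is that linear type forces the condition $G_\infty$: if $I$ is of linear type then $\mathbf S(I)=\mathbf R(I)$, and comparing the fibre $\mathbf S(I)\otimes k(\mfp)=\mathrm{Sym}_{k(\mfp)}(I\otimes k(\mfp))$, of dimension $\mu(I_\mfp)$, with the analytic fibre $\mathbf R(I)\otimes k(\mfp)$, of dimension $\ell(I_\mfp)\le\height\mfp$, yields $\mu(I_\mfp)\le\height\mfp$ for all $\mfp\supseteq I$. Applied to $I=\mfm$ this gives $\mu(\mfm)\le\krulldim R$, so that $\mfm$ is of linear type if and only if $R$ is regular; localizing, ``every prime of linear type'' already forces $R$ to be regular. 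It then remains to bound the dimension: in a regular local ring of dimension $\ge 3$ I would, after localizing at a height-three prime, exhibit a height-two prime $P$ with $\mu(P)\ge4$ (for instance a determinantal prime, the ideal of maximal minors of a $4\times 3$ matrix chosen to have height two and to be prime), which violates $G_\infty$ and is therefore not of linear type, completing (B).

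The analysis for (C) is the same in outline, but the syzygetic defect $\delta(I)=\ker\alpha_2$ replaces $G_\infty$ as the obstruction, and this is where the real work lies. I would use the relation, through the first Koszul homology $H_1(I)$ and the Andr\'e--Quillen homology $H_2(R,R/I,R/I)$, between $\delta(I)$ and the failure of $\mfm$ to be a regular sequence, so as to show that a non-regular local ring always carries a non-syzygetic prime and that a regular local ring of dimension $\ge 4$ carries a prime $P$ with $\pd_R P\ge 2$ and $\delta(P)\ne0$. The main obstacle is precisely the construction and verification of these bad primes: unlike the linear-type case, $\delta$ need not be detected at the maximal ideal (in $R=k[[x,y,z]]/(x^3)$, for example, $\mfm$ appears to be syzygetic while the height-one prime $(x,y)$ is not), so one must locate the correct prime along the singular locus and control $\delta$ there via its Koszul/cotangent description. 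Extracting a dimension-sharp conclusion from this computation---so that the threshold $3$ matches ``syzygetic'' exactly as $2$ matches ``linear type''---is the crux of the argument.
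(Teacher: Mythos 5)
Your reduction to the local case and your treatment of the forward implications of (A) and (B) match the paper, and your observation that a maximal ideal of linear type forces $\mu(\mfm)\le\krulldim(R)$, hence regularity, is a legitimate substitute for the paper's use of the fact that a syzygetic maximal ideal forces regularity. However, there are two genuine problems. First, for the forward implication of (C) you invoke the criterion that $\pd_R I\le 1$ implies $I$ syzygetic; this is false, and the paper's own Lemma~\ref{lLocal2} is a counterexample: $\mfm^2=(x^2,xy,y^2)$ in a two-dimensional regular local ring is perfect of grade two, so $\pd_R(\mfm^2)=1$, yet it is not syzygetic. The correct statement (used in the paper via \cite[Remark~page~91]{hsv}) requires in addition that the ideal be generically a complete intersection; a height-two prime in a three-dimensional regular local ring does satisfy this, so your conclusion stands, but the justification must be repaired. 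Relatedly, your assertion that $\mfm$ is syzygetic in $k[[x,y,z]]/(x^3)$ contradicts the fact (extracted in the paper from the proof of \cite[Corollary~3.8]{hsv}) that a Noetherian local ring with syzygetic maximal ideal is regular; consequently the difficulty you anticipate---that the syzygetic defect might not be detected at the maximal ideal, forcing a search along the singular locus---does not arise, and the reduction of (C) to regular local rings of dimension at least $4$ is immediate.

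Second, and more seriously, the converse implications of (B) and (C) are not proved: everything hinges on actually exhibiting, in a three-dimensional regular local ring, a height-two prime minimally generated by four elements, and, in a four-dimensional regular local ring, a height-three prime that is not syzygetic. You gesture at ``a determinantal prime, the ideal of maximal minors of a $4\times 3$ matrix chosen to have height two and to be prime'' and explicitly defer the syzygetic case as ``the crux,'' but these constructions are the entire content of the paper's Lemmas~\ref{lLocal3} and~\ref{lLocal4}: one must write down explicit generators (a variant of Huneke's ideal for the monomial curve $t^6,t^7+t^{10},t^8$, and a Bresinsky prime for $t^{12},t^{15},t^{20},t^{23}$), prove perfection via Hilbert--Burch or the Buchsbaum--Eisenbud acyclicity criterion, and then prove primality by comparing $\length_R(R/(xR+I))$ with $\length_R(R/(xR+\mfp))$ for an associated prime $\mfp$, using valuations on the normalization and multiplicity theory, before concluding $I=\mfp$ by Nakayama's Lemma. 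For (C) one further needs an explicit witness of the failure of syzygeticity, such as the relation $f_8^2\in HI$ with $H=(f_1,\dots,f_7)$ and $H:f_8\subsetneq HI:f_8^2$. None of this is present, so the proposal establishes only the easy halves of the theorem.
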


Since the linear type and syzygetic conditions are clearly local, to
prove this theorem, one can suppose that $(R,\mfm,k)$ is a Noetherian
local ring with maximal ideal $\mfm$ and residue field $k$. Moreover,
one can substitute the condition $\gldim(R)\leq N$ by the condition
``$R$ is regular local of $\krulldim(R)\leq N$''. If $\krulldim(R)\leq
1$, every nonzero proper ideal is generated by a nonzero divisor,
hence of linear type and syzygetic (see, e.g.,
\cite[Corollary~3.7]{hsv}).  Suppose that $\krulldim(R)\leq 2$ or
3. Since $R$ is regular local, then $\mfm$ is generated by an
$R$-regular sequence, so $\mfm$ is of linear type (see, e.g.,
\cite[Corollary~3.8]{hsv}); moreover $R$ is a UFD, thus every height
one prime ideal is principal (generated by a nonzero divisor), and so
again of linear type; if $\krulldim(R)\leq 3$, then every height two
prime ideal is perfect and generically a complete intersection, hence
syzgygetic (see, e.g., \cite[Remark~page~91]{hsv}). This shows the
``only if'' implications.

Observe that the proof of \cite[Corollary~3.8]{hsv} shows that a
Noetherian local ring with syzygetic maximal ideal is
regular. Therefore, in order to prove the ``if'' implication in
Theorem~(A), it is enough to display, in a two dimensional regular
local ring, a non syzygetic ideal. Similarly, to prove prove the
``if'' implication in Theorem~(B), it suffices to exhibit, in a three
dimensional regular local ring, a height two prime ideal which is not
of linear type. Finally, to prove the ``if'' implication in
Theorem~(C), we exhibit, in a four dimensional regular local ring, a
height three prime ideal which is not syzygetic.

In this direction, we show the next result, a kind of rephrasing of
\cite[Lemma~3]{planasPams} but under the regular local hypothesis, and
hence easier to prove it. We give a different alternative proof using
\cite[Corollary~4.11]{planasCamb}.

\begin{lemma}\label{lLocal2}
Let $(R,\mfm,k)$ be a regular local ring of Krull dimension $2$. Let
$x,y$ be a regular system of parameters. Then $\mfm^2$ is a non
syzygetic ideal. 
\end{lemma}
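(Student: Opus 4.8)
The plan is to verify non-syzygety directly, by exhibiting a nonzero element in the kernel of the canonical surjection $\alpha_2\colon {\bf S}_2(I)\to I^2$, where $I=\mfm^2=(x^2,xy,y^2)$ is minimally generated by the three elements $u_1=x^2$, $u_2=xy$, $u_3=y^2$. Since $R$ is a two-dimensional regular local ring, $R/I$ is Artinian, so by the Auslander--Buchsbaum formula $\pd(I)=1$, and by Hilbert--Burch $I$ admits a resolution $0\to R^2\xrightarrow{\phi} R^3\to I\to 0$ with
$$\phi=\begin{pmatrix} y & 0\\ -x & y\\ 0 & -x\end{pmatrix},$$
whose maximal minors are, up to sign, $u_1,u_2,u_3$. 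In particular, the syzygy module of $I$ is generated by the two columns of $\phi$, that is, by the relations $yu_1-xu_2=0$ and $yu_2-xu_3=0$.

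From this presentation, the symmetric algebra is ${\bf S}(I)=R[T_1,T_2,T_3]/(\ell_1,\ell_2)$, where $\ell_1=yT_1-xT_2$ and $\ell_2=yT_2-xT_3$ are the linear forms read off from $\phi$, so that ${\bf S}_2(I)$ is the degree-two component: the free module $R^6$ spanned by the monomials $T_iT_j$ modulo the $R$-span of the six products $T_k\ell_j$. Under $\alpha_2$, the class of $T_iT_j$ maps to $u_iu_j\in I^2$. The natural candidate for a kernel element is $z=T_1T_3-T_2^2$, since $\alpha_2(z)=u_1u_3-u_2^2=x^2y^2-(xy)^2=0$. It therefore remains only to check that $z$ is nonzero in ${\bf S}_2(I)$.

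The crux of the argument is thus to show that $T_1T_3-T_2^2$ does not lie in the $R$-submodule of $R^6$ generated by the six relations $T_k\ell_j$. I would do this by positing an expression $\sum_{k,j}a_{kj}(T_k\ell_j)=T_1T_3-T_2^2$ and comparing the coefficients of the six monomials $T_1^2,T_1T_2,T_1T_3,T_2^2,T_2T_3,T_3^2$. The coefficients of $T_1^2$ and of $T_3^2$ force two of the parameters to vanish (here one uses that $R$ is a domain, so $x$ and $y$ are nonzerodivisors); substituting back, the coefficients of $T_1T_3$ and of $T_2^2$ each collapse to a single equation of the form $xa+yb=1$ with $a,b\in R$. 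This is impossible, since $xa+yb\in\mfm$ while $1\notin\mfm$. Hence $z\neq 0$ in ${\bf S}_2(I)$ although $\alpha_2(z)=0$, so $\alpha_2$ is not injective and $\mfm^2$ is not syzygetic. The only genuine obstacle is the bookkeeping of this linear system; everything else is formal, and the contradiction $1\in\mfm$ is precisely where the local hypothesis is used.
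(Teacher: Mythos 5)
Your proof is correct, but it takes a genuinely different route from the paper's. The paper's proof is a two-line application of \cite[Corollary~4.11]{planasCamb}: writing $I=\mfm^2=(x^2,xy,y^2)$ and $J=(x^2,y^2)$, it notes that $xy\notin J$ (because $x,y$ is a regular sequence) while $(xy)^2=x^2\cdot y^2\in JI$, so that $J:xy\subsetneq JI:(xy)^2$, and the cited colon-ideal criterion then says $I$ is not syzygetic. You instead compute $\ker(\alpha_2)$ by hand from the Hilbert--Burch presentation $0\to R^2\xrightarrow{\phi}R^3\to I\to 0$ and exhibit the explicit nonzero kernel element $T_1T_3-T_2^2$. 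Your linear system does close up as you describe --- in fact with less bookkeeping than you anticipate: the coefficient of $T_1T_3$ alone already yields an equation $ya_3-xb_1=1$, impossible since the left-hand side lies in $\mfm$. The one step to make airtight is the identification of the syzygy module of $(x^2,xy,y^2)$ with the column space of your $\phi$: the forward direction of Hilbert--Burch only guarantees that \emph{some} $3\times 2$ matrix works, so you should invoke the converse direction (equivalently, Buchsbaum--Eisenbud acyclicity) for your specific $\phi$, which applies because $I_2(\phi)=\mfm^2$ has grade $2$, containing the regular sequence $x^2,y^2$. Both arguments ultimately rest on the same relation $(xy)^2=x^2\cdot y^2$ combined with $xy\notin(x^2,y^2)$; the paper's version is shorter because it outsources the symmetric-algebra computation to a ready-made criterion, while yours is self-contained and has the merit of displaying the offending element of $\ker(\alpha_2)$ explicitly.
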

\begin{proof}
Let $I=\mfm^2=(x^2,xy,y^2)$ and $J=(x^2,y^2)$. Using that $x,y$ is an
$R$-regular sequence, it is easy to check that $xy\not\in J$ (see
\cite[Theorem~9.2.2]{bh}, to relate it to the simplest case of the
Monomial Conjecture). Since $(xy)^2\in JI$, then $J:xy\subsetneq
JI:(xy)^2$. By \cite[Corollary~4.11]{planasCamb}, we conclude that $I$
is not syzygetic.
\end{proof}

Next lemma displays, in a three dimensional regular local ring, a
height two prime ideal which is not of linear type, thus generalizing
\cite[Corollary~2.7]{gop}. There, in the context of the Shimoda
Conjecture, one exhibits, in a three dimensional regular local ring, a
non-complete intersection height two prime ideal. (Recall that
complete intersection implies linear type.)

\begin{lemma}\label{lLocal3}
Let $(R,\mfm,k)$ be a regular local ring of Krull dimension $3$. Let
$x,y,z$ be a regular system of parameters. Let $I$ be the ideal of $R$
generated by
\begin{eqnarray*}
  &&f_1=y^3-x^4\mbox{ , }f_2=xyz-z^3+x^4-xy^3,\\
  &&f_3=x^2y+y^2z-xz^2-x^3y\mbox{ and }f_4=xy^2-yz^2-x^2y^2+x^3z.
\end{eqnarray*}
Then $I$ is a height two prime ideal minimally generated by four
elements. In particular, $I$ is not of linear type.
\end{lemma}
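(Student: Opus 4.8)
The plan is to verify three assertions about the ideal $I=(f_1,f_2,f_3,f_4)$: that it is prime, that it has height two, and that it is minimally generated by exactly four elements; the final conclusion then follows immediately from the numerical criterion below. The main obstacle is clearly the primality, since height and minimality are comparatively mechanical.

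\medskip

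\emph{Height and minimal generation.} Since $(R,\mfm,k)$ is regular local of dimension three, once $I$ is known to be prime it cannot be the whole ring nor the zero ideal, so $0 < \height I \le 2$; one excludes height one by observing that a height one prime in the UFD $R$ would be principal, which is incompatible with the degree/leading-term structure of the $f_i$ (alternatively, one checks directly that $I$ is not contained in any principal ideal). For minimal generation, I would compute $\dim_k(I/\mfm I)$ and show it equals $4$, i.e.\ that the classes of $f_1,f_2,f_3,f_4$ are $k$-linearly independent modulo $\mfm I$; examining the lowest-degree forms of each $f_i$ (the leading forms $y^3$, $xyz-z^3$, $x^2y+y^2z-xz^2$, $xy^2-yz^2$ in the associated graded ring, which is a polynomial ring $k[x,y,z]$) shows these cannot be combined, with coefficients in $\mfm$, to produce a relation, by Nakayama.

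\medskip

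\emph{Primality.} This is the heart of the argument and where I expect the real work to lie. The standard route is to show that $R/I$ is a domain of dimension one; equivalently, that $I$ equals the defining ideal of a suitable one-dimensional integral object. The natural strategy is to exhibit $R/I$ as (the local ring of) a monomial or monomial-type space curve: one looks for a parametrization $x\mapsto t^a$, $y\mapsto t^b$, $z\mapsto t^c$ (or a more general analytic curve germ) whose defining prime in $R$ is generated precisely by $f_1,\dots,f_4$. Concretely I would (i) guess the numerical semigroup / exponents making each $f_i$ vanish under such a substitution, (ii) verify that the kernel of the induced map $R\to k[[t]]$ has height two and is therefore a height two prime, and (iii) prove that this kernel is generated by the four given elements — the containment $(f_1,\dots,f_4)\subseteq \ker$ is a direct check, while the reverse containment is the delicate part and would be established by a dimension or Hilbert-function count, showing the quotient $R/(f_1,\dots,f_4)$ already has the correct multiplicity/length behavior to coincide with $R/\ker$.

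\medskip

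Once primality, $\height I = 2$, and $\mu(I)=4$ are in hand, the ``in particular'' is immediate: an ideal of linear type in a Noetherian local ring of dimension $d$ must satisfy the constraint $\mu(I_\mfp) \le \height \mfp$ locally (the $G_\infty$ / $F_1$ condition forced by linear type), and here $\mu(I)=4$ while $\height I = 2 < 4$, so $I$ cannot be of linear type. I expect the primality verification — in particular proving the non-obvious reverse containment $\ker\subseteq(f_1,\dots,f_4)$ — to be the genuine difficulty, and I would lean on a Gröbner-basis or tangent-cone computation (the associated graded ideal, whose generators are the leading forms listed above, is what controls both the minimal number of generators and the length count) to close it.
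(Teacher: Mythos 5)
Your outline correctly identifies the shape of the intended argument (a space-curve ideal, primality forced by a length/multiplicity comparison, and failure of linear type from the number of generators), but as written it has genuine gaps and is a plan rather than a proof. First, the entire substance of the lemma --- that these four specific polynomials generate a prime ideal --- is deferred to ``I would guess the exponents'' and ``would be established by a dimension count''; none of it is carried out, and the relevant curve is not even monomial (the example comes from the kernel of $X,Y,Z\mapsto t^6,t^7+t^{10},t^8$), so step (i) of your plan is not a routine guess. Second, and more structurally, your primality strategy presupposes a surjection $R\to k\llbracket t\rrbracket$ sending $x,y,z$ to powers of $t$; but $R$ is an arbitrary regular local ring of dimension $3$ (possibly of mixed characteristic, e.g.\ a localization of $\mbz_p\llbracket u,v\rrbracket$), where no such coefficient-field parametrization exists. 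The paper circumvents this by completing, proving via Hilbert--Burch that $I$ is perfect of grade $2$ (hence height-unmixed), taking an \emph{arbitrary} associated prime $\mfp$, and using the valuation of the normalization of $R/\mfp$ together with the computation $\length_R(R/(xR+I))=6=[k_V:k]\nu_x\leq\length_R(R/(xR+\mfp))$ to force $xR+I=xR+\mfp$ and then $I=\mfp$ by Nakayama. Note the order of operations: unmixedness must be established \emph{before} the primality argument (otherwise the chosen associated prime could be $\mfm$ and $R/\mfp$ would not be one-dimensional), whereas you propose to deduce the height from primality.

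Two smaller points. Your Nakayama argument for $\mu(I)=4$ via the linearly independent degree-$3$ leading forms is fine (the paper gets the same conclusion from minimality of the Hilbert--Burch resolution). But your final numerical criterion is misstated: you compare $\mu(I)=4$ with $\height I=2$, yet the local condition $\mu(I_\mfp)\leq\height\mfp$ at $\mfp=I$ involves $\mu(I_I)=2$, which causes no contradiction. The correct application is at $\mfp=\mfm$: an ideal of linear type satisfies $\mu(I)\leq\dim R=3$ (\cite[Proposition~2.4]{hsv}), and $\mu(I)=4$ violates this.
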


Finally, the third lemma exhibits a non syzygetic height three prime
ideal in a four dimensional regular local ring.

\begin{lemma}\label{lLocal4}
Let $(R,\mfm,k)$ be a regular local ring of dimension $4$. Let
$x,y,z,t$ be a regular system of parameters. Let $I$ be the ideal of
$R$ generated by
\begin{eqnarray*}
&&  f_1=yz-xt\mbox{ , }f_2=z^3-x^5\mbox{ , }f_3=z^2t-x^4y\mbox{ ,}\\
&&  f_4=zt^2-x^3y^2\mbox{ , }f_5=t^3-x^2y^3\mbox{ , }f_6=y^4-x^5\mbox{ ,}\\
&&  f_7=y^3t-x^4z\mbox{ , }f_8=y^2t^2-x^3z^2.
\end{eqnarray*}
Then $I$ is a height three prime ideal which is not a syzygetic ideal.
\end{lemma}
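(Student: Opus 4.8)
The plan is to recognise $I$ as the defining ideal of a monomial space curve and to extract from that description both the primality/height statement and, via the colon criterion \cite[Corollary~4.11]{planasCamb} already exploited in Lemma~\ref{lLocal2}, the failure of syzygeticity. The starting observation is that, assigning the weights $w(x)=12$, $w(y)=15$, $w(z)=20$, $w(t)=23$, every $f_i$ is a difference of two monomials of the same weight: the $f_2,f_3,f_4,f_5$ are the binomials $z^{3-k}t^{k}-x^{5-k}y^{k}$ for $k=0,1,2,3$, the $f_6,f_7,f_8$ are $y^{4-k}t^{k}-x^{5-k}z^{k}$ for $k=0,1,2$, and $f_1=yz-xt$. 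Hence $I$ is generated by weighted-homogeneous binomials and, after the routine reduction to the graded (or complete) model, is the kernel $P$ of the map sending $x,y,z,t$ to $s^{12},s^{15},s^{20},s^{23}$.

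First I would settle the primality and the height. The image of that map is the one-dimensional semigroup ring of $\langle 12,15,20,23\rangle$, so $P$ is a prime of height $4-1=3$; because the lattice $L=\{u\in\mbz^4:12u_1+15u_2+20u_3+23u_4=0\}$ is saturated, $P$ coincides with the associated lattice ideal. The substantive point is to verify that the eight displayed binomials already generate $P$ — equivalently that they form a Gr\"obner basis of $P$ for a suitable term order — which is a finite binomial computation naturally organised by the weighted grading. That primality and height then descend to the given regular local ring $R$ is standard, since the $f_i$ are weighted-homogeneous in the regular system of parameters $x,y,z,t$.

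For the non-syzygetic part I would follow Lemma~\ref{lLocal2} in method: produce a subideal $J\subseteq I$ with $I=J+(a)$ and an element $r$ with $ra^2\in JI$ but $ra\notin J$, so that $(J:a)\subsetneq(JI:a^2)$ and \cite[Corollary~4.11]{planasCamb} applies. The weighted grading tells me where to look. A weight count already excludes the most naive attempt $a=f_1$, $J=(f_2,\dots,f_8)$ with $r=1$: the generator weights are $35,60,60,63,66,68,69,76$, so $f_1^2$ has weight $70$ while $(f_2,\dots,f_8)I$ starts in weight $95$, whence $f_1^2\notin(f_2,\dots,f_8)I$. One is therefore pushed either to omit a single top-weight generator $f_{i_0}$, testing whether the remaining seven still form a reduction of $I$ so that $f_{i_0}^2$ lands in their product with $I$ (giving the witness with $r=1$ and $a=f_{i_0}$), or to allow a genuinely quadratic multiplier $r\neq 1$.

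I expect this last point to be the crux. The linear syzygies of $I$ are easy to exhibit — for instance one checks directly that
\[
z^2f_1=yf_2-xf_3,\qquad ztf_1=yf_3-xf_4,\qquad y^3f_1=zf_6-xf_7,
\]
and analogous identities hold throughout — but any relation in $I^2$ assembled out of these lies in the trivial part of $\ker(\alpha_2)$ and so contributes nothing to syzygeticity; indeed each such identity already forces $rf_1\in J$ for the corresponding multiplier $r$ (e.g. $z^2f_1=yf_2-xf_3\in J$), so it cannot witness the strict colon inclusion. The real content is to locate a relation $\sum c_{ij}f_if_j=0$ in $I^2$ that is \emph{not} a consequence of the linear syzygies — equivalently a pair $(a,r)$ with $ra^2\in JI$ yet $ra\notin J$ — and then to certify the strictness $(J:a)\subsetneq(JI:a^2)$ by comparing the two colon ideals weight by weight. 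This is a finite but delicate binomial calculation; once the witness is produced, \cite[Corollary~4.11]{planasCamb} yields that $I$ is not syzygetic.
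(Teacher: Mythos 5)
Your proposal correctly identifies the provenance of the example (Bresinsky's monomial curve $t^{12},t^{15},t^{20},t^{23}$) and the right shape of the non-syzygeticity argument, but it leaves unproved exactly the two steps that carry the weight of the lemma. First, the claim that primality and height ``descend to the given regular local ring $R$'' because the $f_i$ are weighted-homogeneous is not standard and, as stated, does not work: $(R,\mfm,k)$ is an \emph{abstract} regular local ring of dimension $4$, possibly of mixed characteristic, so there need be no $k$-algebra structure and no homomorphism sending $x,y,z,t$ to powers of a single element; the semigroup-ring/lattice-ideal description of $P$ simply has no a priori meaning in $R$. This is precisely the difficulty the paper's proof is built to circumvent: it passes to the completion, exhibits an explicit length-three free complex $0\to R^5\to R^{12}\to R^8\to R\to R/I\to 0$, verifies acyclicity by the Buchsbaum--Eisenbud criterion (producing regular sequences inside the relevant determinantal ideals), concludes that $I$ is perfect of grade $3$ and hence unmixed, and then proves $I=\mfp$ for an associated prime $\mfp$ by comparing $\length_R(R/(xR+I))=12$ with $\length_D(D/xD)=[k_V:k]\nu(x)\geq 12$ via multiplicity theory on the integral closure $V$ of $D=R/\mfp$, finishing with Nakayama. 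Your weight bookkeeping (which is where the bound $\nu_x\geq 12$ comes from) is the germ of that argument, but a Gr\"obner-basis verification over $\mbk[X,Y,Z,T]$ cannot simply be transported to $R$.

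Second, for non-syzygeticity you correctly guess that the witness should be the top-weight generator, but you stop short of producing it. The paper takes $H=(f_1,\dots,f_7)$ and $a=f_8$: minimality of the resolution gives $f_8\notin H$, hence $H:f_8\subsetneq R$, and the explicit identity $f_8^2=x^2yzt\,f_1^2-x^4f_1f_5-x^2f_2f_7+tf_5f_6+x^2f_6f_7$ gives $f_8^2\in HI$, hence $HI:f_8^2=R$, so $H:f_8\subsetneq HI:f_8^2$ and \cite[Lemma~4.2]{planasCamb} applies. Without (i) a proof that $f_8\notin(f_1,\dots,f_7)$ --- which in the paper comes for free from minimality of the resolution you have not constructed --- and (ii) the displayed quadratic relation, the argument is a plan rather than a proof. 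Note also that what is needed is not that $f_1,\dots,f_7$ form a reduction of $I$, only the single membership $f_8^2\in HI$, which is weaker and is what the identity certifies.
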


The general skeleton of the proofs of Lemmas~\ref{lLocal3} and
\ref{lLocal4} are similar to that of the proof of
\cite[Proposition~2.6]{gop}. Namely, once the candidate $I$ is chosen,
we show that $I$ is perfect with the desired height, in particular,
height unmixed. Then we pick an associated prime $\mfp$ to $I$, which
will be of the same height, and, by means of multiplicity theory, we
show that $xR+I$ and $xR+\mfp$ have the same colength, concluding, by
Nakayama's Lemma, that $I$ and $\mfp$ are equal.
  
The ideal $I$ displayed in Lemma~\ref{lLocal3} is a small variation of
\cite[Example~3.7]{huneke}. Concretely, Huneke considers the height
two prime ideal defined by the kernel of the homomorphism from the
power series ring $\mbc\llbracket X,Y,Z\rrbracket$ to $\mbc\llbracket
t\rrbracket$, sending $X,Y,Z$ to $t^6,t^7+t^{10},t^8$,
respectively. He shows that this ideal is generated by the $3\times 3$
minors of a specified $4\times 3$ matrix $L$, whose entries are either
$0$, or else among one of the monomial terms $X,Y,Z,X^2, XY$ times a
$\pm 1,\pm 2$ integer coefficient. Our example consists in taking
these $3\times 3$ minors, but substituting the variables $X,Y,Z$ by
the regular parameters $x,y,z$ and, in order to avoid characteristic
two problems, replacing in $L$, $\pm 2$ by $\pm 1$ (surprisingly
enough, it works).

As for the ideal $I$ considered in Lemma~\ref{lLocal4}, we recover a
particular case of a family of prime ideals with unbounded number of
generators provided by Bresinsky in \cite{bresinsky}. Concretely, we
consider the kernel of the homomorphism from $\mbk[X,Y,Z,T]$, $\mbk$
any field, to $\mbk[t]$, sending $X,Y,Z,T$ to
$t^{12},t^{15},t^{20},t^{23}$ and then, as before, just substitute the
variables by the regular parameters.

Before proceeding to prove Lemmas~\ref{lLocal3} and ~\ref{lLocal4}, we
highlight the good behaviour of the syzygetic and linear type
conditions through faifhfully flat morphisms of rings. Indeed, this
follows from \cite[Corollaire~2.3]{planasMan} (see also
\cite[Theorem~2.4 and Example~2.3]{planasCamb}), where one shows that
these conditions are characterized in terms of the exactness of a
complex of $R$-modules and noting that, if $R\to S$ is a faithfully
flat morphism of rings, then $I\otimes _RS=IS$.

\begin{proof}[Proof of Lemma~\ref{lLocal3}]
Since $(R,\mfm)$ is a three dimensional regular local ring with
maximal ideal $\mfm$ generated by $x,y,z$, then its completion
$(\widehat{R},\widehat{\mfm})$ is a three dimensional regular local
ring with maximal ideal
$\widehat{\mfm}=\mfm\widehat{R}=(x,y,z)\widehat{R}$ generated by the
regular system of paremeters $x,y,z$. Let $I=(f_1,f_2,f_3,f_4)$ and
$\widehat{I}=I\widehat{R}=(f_1,f_2,f_3,f_4)\widehat{R}$. If we prove
that $\widehat{I}$ is prime and not of linear type, then
$I=I\widehat{R}\cap R$ is prime and not of linear type, because the
completion morphism is faithfully flat (see, e.g.,
\cite[\S~8]{matsumura}). Therefore we can suppose that $R$ is
complete.

First observe that $f_1,f_2,f_3,f_4$ are, up to a change of sign, the
$3\times 3$ minors of the matrix
\begin{eqnarray*}
  \varphi_2=\left(\begin{array}{rcr}x&xy&z\\x&y&0\\-z&-x^2&-y\\-y&-z&x
  \end{array}\right).
\end{eqnarray*}
In other words, $I=I_3(\varphi_2)$. Since $(f_1,f_2,x)=(x,y^3,z^3)$,
then $\grade(f_1,f_2,x)=3$. By \cite[Corollary~1.6.19]{bh}, $f_1,f_2$
is an $R$-regular sequence in $I_3(\varphi_2)$ and so
$\grade(I_3(\varphi_2))\geq 2$. Let $\varphi_1$ be the $1\times 4$
matrix defined as $(f_1,\ldots,f_4)$. By the Hilbert-Burch Theorem
(e.g., \cite[Theorem~1.4.16]{bh}),
\begin{eqnarray*}
0\rightarrow
F_2=R^3\xrightarrow{\varphi_2}F_1=R^4\xrightarrow{\varphi_1}F_0=R\rightarrow
R/I\rightarrow 0
\end{eqnarray*}
is a free resolution of $R/I$. (It is minimal since
$\varphi_2(R^3)\subset\mfm R^4$ and $\varphi_1(R^4)=I\subset \mfm$.)
Therefore
\begin{eqnarray*}
2\leq\grade(I)=\min\{i\geq 0\mid \Ext^{i}_{R}(R/I,R)\neq
0\}\leq\pd_R(R/I)\leq 2
\end{eqnarray*}
and $I$ is a perfect ideal of grade $2$ (see, e.g.,
\cite[Theorem~1.2.5 and page~25]{bh}). In particular, $I$ is grade
(and height) unmixed (see, e.g., \cite[Proposition~1.4.15]{bh}) and so
$\mfm$ is not an associated prime to $I$.

Let $\mfp$ be any associated prime of $I$ and set $D=R/\mfp$. Thus $D$
is a one dimensional complete Noetherian local domain
(\cite[page~63]{matsumura}). Let $V$ be its integral closure in its
quotient field $K$. Then $V$ is a one dimensional integrally closed
Noetherian local domain, hence a DVR, a discrete valuation ring; note
that $V$ is also complete (see, e.g., \cite[Theorem~4.3.4]{sh}).

Let $\nu$ be the valuation on $K$ corresponding to $V$. Keep calling
$x,y,z$ to the images of the regular system of parameters of $R$ in
$V$. Set $\nu_x=\nu(x)$, $\nu_y=\nu(y)$ and $\nu_z=\nu(z)$. In $V$,
$f_1=0$. Applying $\nu$ to the equality $x^4-y^3=0$, one gets
$4\nu_x=3\nu_y$. Thus $\nu_x=3q$, for some integer $q\geq 1$. In fact,
$q>1$. Indeed, suppose that $q=1$, $\nu_x=3$ and $\nu_y=4$. Since
$f_2=0$ in $V$, then $z^3=x(yz+x^3-y^3)$. Applying $\nu$ to this
equality, $3\nu_z\geq\min(12,7+\nu_z)$, which implies $\nu_z\geq
4$. Since $f_3=0$ in $V$, then $x^2y=-y^2z+xz^2+x^3y$. Applying $\nu$
to this equality, one gets $10\geq\min(12,11,13)$, a
contradiction. Therefore $\nu_x\geq 6$.

Observe that $xR+I=(x,y^3,y^2z,yz^2,z^3)$. Set $S=R/xR$ and consider
(by abuse of notation) $y,z$ a regular system of parameters of the
regular local ring $(S,\mfn)$, where $\mfn=(y,z)$. Then $R/(xR+I)\cong
S/\mfn^3$. Since $xR=\Ann_R(S)$, then
$\length_R(R/(xR+I))=\length_S(S/\mfn^3)$. Since $y,z$ is a
$S$-regular sequence, there exists a graded isomorphism $k[Y,Z]\cong
G(\mfn)$ of $k$-algebras, between the polynomial ring in two variables
$Y,Z$ over the field $k=S/\mfn$ and the associated graded ring of the
ideal $\mfn$. Using the two exact short sequences
$0\to\mfn^i/\mfn^{i+1}\to S/\mfn^{i+1}\to S/\mfn^i\to 0$, for $i=1,2$,
one deduces that $\length_S(S/\mfn^3)=6$. Therefore
$\length_R(R/(xR+I))=6$.

On the other hand, since $xR+I\subseteq xR+\mfp$ and $R/(xR+\mfp)\cong
(R/\mfp)/(x\cdot R/\mfp)=D/xD$, then
\begin{eqnarray*}
6=\length_R(R/(xR+I))\geq\length_R(R/(xR+\mfp))=\length_D(D/xD).
\end{eqnarray*}
Since $f_1=0$ and $f_2=0$ in $D$, then $y^3,z^3\in xD$, and so $xD$ is
an $\mfm/\mfp$-primary ideal of the one dimensional Cohen-Macaulay
local domain $(D,\mfm/\mfp,k)$. Using \cite[Proposition~11.1.10]{sh},
we deduce that $\length_D(D/xD)=e_D(xD;D)$, the multiplicity of $xD$
on $D$. Clearly, $V$ is a finitely generated Cohen-Macaulay $D$-module
of $\rank_D(V)=1$. By \cite[Corollary~4.6.11]{bh},
$\length_D(V/xV)=e_D(xD;D)\cdot\rank_D(V)=e_D(xD;D)$. Note that
$\length_D(V/xV)=[k_V:k]\cdot\length_V(V/xV)$, where $[k_V:k]$ is
the degree of the extension of the residue fields of $V$ and of
$D$. Since $V$ is a DVR, $\length_V(V/xV)=\nu(x)=\nu_x$. Therefore,
$\length_D(D/xD)=[k_V:k]\cdot\nu_x$. Summing up all together, we get
\begin{eqnarray*}
6=\length_R(R/(xR+I))\geq\length_R(R/(xR+\mfp))=[k_V:k]\cdot\nu_x\geq
6.
\end{eqnarray*}
Therefore $\length_R(R/(xR+I))=\length_R(R/(xR+\mfp))$ and, by the
additivity of the length with respect to exact short sequences,
$xR+I=xR+\mfp$.

Note that $x\not\in\mfp$, otherwise $\mfp\supset xR+I\supset
(x,y^3,z^3)$ and $\mfp=\mfm$, a contradiction. Then $\mfp\cap
xR=x\mfp$. In particular, on tensoring $0\to\mfp/I\to R/I\to R/\mfp\to
0$ by $R/xR$, one obtains the exact sequence $0\to L/xL\to R/(xR+I)\to
R/(xR+\mfp)\to 0$, where $L=\mfp/I$. Since $xR+I=xR+\mfp$, then
$L=xL$. By Nakayama's Lemma, $L=0$ and $I=\mfp$.

We conclude that $I$ is a prime ideal of $R$. Since the aforementioned
resolution of $R/I$ is minimal, $I$ is minimally generated by $4$
elements, which in particular implies that $I$ is not of linear type,
because the minimal number of generators of an ideal of linear type is
bounded above by the dimension of the ring (see
\cite[Proposition~2.4]{hsv}).
\end{proof}

\begin{proof}[Proof of Lemma~\ref{lLocal4}]
Since the proof of the present proposition is quite analogous to that
of Lemma~\ref{lLocal3}, we skip now some details and re-direct the
reader to the former proof.  For instance, as before, we can suppose
that $R$ is complete.  Let $\varphi_1$ be the $1\times 8$ matrix
defined as $(f_1,\ldots ,f_8)$. Let $\varphi_2$ and $\varphi_3$ be the
matrices defined as:
\begin{eqnarray*}
\varphi_2=\left( \begin{array}{cccccccccccc}
y^2t&y^3&t^2&zt&z^2&x^3z&x^4&-yt^2&x^2y^2&x^3y&x^4&0\\   
0&  0& 0& 0& -y&0&  0& x^3&  0&   0&  -t&0\\   
0&  0& 0& -y&x& 0&  0& 0&   0&   -t& z& x^3\\  
0&  0& -y&x& 0& 0&  0& 0&   -t&  z&  0& 0\\   
0&  0& x& 0& 0& 0&  0& -xy& z&   0&  0& -y^2\\ 
0&  -z&0& 0& 0& 0&  -t&-x^3& 0&   0&  0& -x^2y\\
-z& x& 0& 0& 0& -t& y& 0&   0&   0&  0& 0\\   
x&  0& 0& 0& 0& y&  0& z&   0&   0&  0& t
\end{array}\right)\mbox{ and }
\end{eqnarray*}
\begin{eqnarray*}
\varphi_3=\left(\begin{array}{ccccc}
-t& 0& 0& -y&0\\  
0&  0& 0& t& -x^2y\\
0&  0& -z&0& -yt\\ 
0&  -z&t& 0& 0\\   
-x^3&t& 0& 0& 0\\   
z&  0& 0& x& 0\\   
0&  0& 0& -z&x^3\\  
-y& 0& 0& 0& -t\\  
0&  0& x& 0& y^2\\  
0&  x& -y&0& 0\\   
0&  -y&0& 0& -x^3\\ 
x&  0& 0& 0& z
\end{array}\right).
\end{eqnarray*}
Since $\varphi_3\cdot\varphi_2=0$ and $\varphi_2\cdot\varphi_1=0$, then
\begin{eqnarray*}
  0\rightarrow
  F_3=R^5\xrightarrow{\varphi_3}F_2=R^{12}\xrightarrow{\varphi_2}
  F_1=R^{8}\xrightarrow{\varphi_1}F_0=R\rightarrow R/I\rightarrow 0
\end{eqnarray*}
is a complex of $R$-modules. To prove its exactness we will use the
acyclicity criterion of Buchsbaum and Eisenbud (see, e.g.,
\cite[Theorem~1.4.12]{bh}). Set $r_i=\sum_{j=i}^3(-1)^{j-i}\rank F_j$,
so that $r_1=1$, $r_2=7$ and $r_3=5$.

Note that, since $(f_2,f_5,f_6,x)=(x,y^4,z^3,t^3)$, then
$\grade(f_2,f_5,f_6,x)=4$. By \cite[Corollary~1.6.19]{bh},
$f_2,f_5,f_6$ is an $R$-regular sequence in $I=I_1(\varphi_1)$. In
particular, $\grade(I)\geq 3$.

In order to prove $\grade(I_7(\varphi_2))\geq 2$, we look for minors
of $\varphi_2$ with pure terms in one of the parameters. For instance,
up to sign, the minor $g_1:=y^{10}-2x^5y^6+x^{10}y^2\in
I_7(\varphi_2)$, with pure term in $y$, is obtained from the $7\times
7$ submatrix given by the rows $1,2,3,4,5,7,8$ and the columns
$2,3,4,5,6,7,12$. Similarly, we get $g_2:=z^8-2x^5z^5+x^{10}z^2\in
I_7(\varphi_2)$ from the $7\times 7$ submatrix given by the rows
$1,3,4,5,6,7,8$ and the columns $1,2,5,8,9,10,11$. Since
$(g_1,g_2,x)=(x,y^{10},z^8)$, then $\grade(g_1,g_2,x)=3$ and $g_1,g_2$
is an $R$-regular sequence in $I_7(\varphi_2)$ and
$\grade(I_7(\varphi_2))\geq 2$.

As before, let us seek for minors of $\varphi_3$ with pure terms in
one of the parameters. Thus $h_1=y^6-x^5y^2\in I_5(\varphi_3)$ is
obtained from the $5\times 5$ submatrix given by the rows
$1,8,9,10,11$; $h_2=z^5-x^5z^2\in I_5(\varphi_3)$ is obtained from the
rows $3,4,6,7,12$ and, finally, $h_3=t^5-x^2y^3t^2\in I_5(\varphi_3)$
is obtained from the rows $1,2,4,5,8$. Since
$(h_1,h_2,h_3,x)=(x,y^6,z^5,t^5)$, then $h_1,h_2,h_3$ is an
$R$-regular sequence in $I_5(\varphi_3)$ and
$\grade(I_5(\varphi_3))\geq 3$. We conclude that the complex above is
a (minimal) free resolution of $R/I$.

Therefore $I$ is a perfect ideal of grade $3$. In particular, $I$ is
height unmixed and so $\mfm$ is not an associated prime to $I$.

Let $\mfp$ be any associated prime of $I$ and set $D=R/\mfp$. Thus $D$
is a one dimensional complete Noetherian local domain. Then $V$, the
integral closure of $D$ in its quotient field $K$, is a DVR (see
\cite[Theorem~4.3.4]{sh}). Let $\nu$ be the valuation on $K$
corresponding to $V$. Set $\nu_x=\nu(x)$, $\nu_y=\nu(y)$,
$\nu_z=\nu(z)$ and $\nu(t)=\nu_t$. In $V$, $f_2=z^3-x^5=0$,
$f_5=t^3-x^2y^3=0$ and $f_6=y^4-x^5=0$. Applying $\nu$ to these
equalities, one gets $3\nu_z=5\nu_x$, $3\nu_t=2\nu_x+3\nu_y$ and
$4\nu_y=5\nu_x$. The positive vector
$(\nu_x,\nu_y,\nu_z,\nu_t)\in\mbz^4$, with smallest $\nu_x\geq 1$,
satistying these three conditions is $(12,15,29,23)$. (Clealy, this
vector also satisfies all the other conditions arising from $f_i=0$.)
In particular, $\nu_x\geq 12$.

Let $(S,\mfn,k)$ be the regular local ring with $S=R/xR$ and
$\mfn=\mfm/xR=(y,z,t)$, by abuse of notation.  One has
$xR+I=(x,yz,z^3,z^2t,zt^2,t^3,y^4,y^3t,y^2t^2)$ and $R/(xR+I)\cong
S/J$, where $J$ is the ideal of $S$ defined as
$J=(yz,z^3,z^2t,zt^2,t^3,y^4,y^3t,y^2t^2)$. Since $xR=\Ann_R(S)$, then
$\length_R(R/(xR+I))=\length_S(S/J)$. Since $y,z,t$ is a $S$-regular
sequence, there exists a graded isomorphism $k[Y,Z,T]\cong G(\mfn)$ of
$k$-algebras, where $G=G(\mfn)$ stands for the associated graded ring
of $\mfn$. Let $J^*$ denote the homogeneous ideal of $G$ generated by
all the initial forms of elements of $J$. Proceeding as in the proof
of \cite[Lemma~2.9]{gop}, one sees that
$\length_S(S/J)=\length_S(G/J^*)$. Let $L$ be the ideal of $G$
generated by the initial forms of
$yz,z^3,z^2t,zt^2,t^3,y^4,y^3t,y^2t^2$ in $G$. By
\cite[Theorem~2.11]{gop} (see also \cite[Remark~2.10]{gop}),
$L=J^*$. Hence, $\length_S(G/J^*)=\length_S(G/L)$. Through the
isomorsphim $k[Y,Z,T]\cong G$, one deduces that $G/L$ is isomorphic to
the $k$-vector space spanned by
$1,Y,YT,YT^2,Y^2,Y^2T,Y^3,Z,ZT,Z^2,T,T^2$. Therefore
$\length_R(R/(xR+I))=\length_S(S/J)=\length_S(G/J^*)=\length_S(G/L)=12$.

As in Lemma~\ref{lLocal3}, since $xR+I\subseteq xR+\mfp$ and
$R/(xR+\mfp)\cong (R/\mfp)/(x\cdot R/\mfp)=D/xD$, then
\begin{eqnarray*}
12=\length_R(R/(xR+I))\geq\length_R(R/(xR+\mfp))=\length_D(D/xD).
\end{eqnarray*}
Since $f_6=0$, $f_2=0$ and $f_5=0$ in $D$, then $y^4,z^3,t^3\in xD$,
and so $xD$ is an $\mfm/\mfp$-primary ideal of the one dimensional
Cohen-Macaulay local domain $(D,\mfm/\mfp,k)$. Thus
$\length_D(D/xD)=e_D(xD;D)$. Since $V$ is a finitely generated
Cohen-Macaulay $D$-module of rank $1$,
$\length_D(V/xV)=e_D(xD;D)\cdot\rank_D(V)=e_D(xD;D)$. Moreover
$\length_D(V/xV)=[k_V:k]\cdot\length_V(V/xV)=[k_V:k]\cdot\nu(x)$
Therefore, $\length_D(D/xD)=[k_V:k]\cdot\nu_x$. Summing up all
together,
\begin{eqnarray*}
12=\length_R(R/(xR+I))\geq\length_R(R/(xR+\mfp))=[k_V:k]\cdot\nu_x\geq
12.
\end{eqnarray*}
Therefore $\length_R(R/(xR+I))=\length_R(R/(xR+\mfp))$ and
$xR+I=xR+\mfp$.

Again, note that $x\not\in\mfp$, otherwise $\mfp\supset xR+I\supset
(x,y^4,z^3,t^3)$ and $\mfp=\mfm$, a contradiction. So $\mfp\cap
xR=x\mfp$. Proceeding as in the end of proof of Lemma~\ref{lLocal3},
we conclude that $I=\mfp$ is a prime ideal of $R$.  Set
$H:=(f_1,\ldots,f_7)\subset I$.  Since the aforementioned resolution
of $R/I$ is minimal, $f_8\not\in H$ and $H:f_8\subsetneq R$. However,
one can check that
$f_8^2=x^2yztf_1^2-x^4f_1f_5-x^2f_2f_7+tf_5f_6+x^2f_6f_7$.  Thus
$f_8^2\in HI$ and $HI:f_8^2=R$. Therefore, $H:f_8\subsetneq HI:f_8^2$
and $I$ is not syzygetic (see \cite[Lemma~4.2]{planasCamb}).
\end{proof}

In terms of the Andr\'e-Quillen homology (see \cite{andre} and
\cite{quillen}; see also \cite{mr}, for a new and recent treatment),
and as a corollary of the Theorem, we state the following
characterization of Noetherian rings of low global dimension. Again,
just for the sake of completeness, we include item (A), shown in
general in \cite{planasPams}. 

\begin{corollary*}
Let $R$ be a Noetherian ring.
\begin{itemize}
\item[{(A)}:] $\gldim(R)\leq 1\Leftrightarrow H_2(R,S,\cdot)=0$, or
  $H_2(R,S,S)=0$, for every quotient ring $S=R/I$.
\item[{(B)}:] $\gldim(R)\leq 2\Leftrightarrow H_2(R,S,\cdot)=0$, for
  every quotient domain $S=R/I$.
\item[{(C)}:] $\gldim(R)\leq 3\Leftrightarrow H_2(R,S,S)=0$, for
every quotient domain $S=R/I$.
\end{itemize}
\end{corollary*}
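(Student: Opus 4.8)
The plan is to obtain the Corollary as a formal consequence of the Theorem, the only extra ingredient being the dictionary between the two ideal-theoretic conditions appearing there and the vanishing of André-Quillen homology. Concretely, I would invoke the per-ideal homological characterizations from \cite{planasPams}: for an ideal $I$ of $R$ with quotient $S=R/I$, the ideal $I$ is of linear type if and only if $H_2(R,S,\cdot)=0$, that is, $H_2(R,S,M)=0$ for every $S$-module $M$, whereas $I$ is syzygetic if and only if $H_2(R,S,S)=0$. These rest on the standard low-degree computations $H_0(R,S,\cdot)=0$ and $H_1(R,S,M)=(I/I^2)\otimes_S M$, together with the identification of the vanishing of $H_2(R,S,S)$ with the injectivity of the multiplication map $\mathbf{S}_2(I)\to I^2$ (which is exactly the syzygetic condition); the stronger vanishing for arbitrary coefficients encodes the full isomorphism $\mathbf{S}(I)\cong\mathbf{R}(I)$, i.e.\ linear type.

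Granting this dictionary, each item follows by substitution. For (A), Theorem~(A) asserts $\gldim(R)\leq 1\Leftrightarrow$ every ideal is of linear type $\Leftrightarrow$ every ideal is syzygetic; running every ideal $I$ and its quotient $S=R/I$ through the two characterizations converts the middle condition into ``$H_2(R,S,\cdot)=0$ for every quotient ring $S=R/I$'' and the last into ``$H_2(R,S,S)=0$ for every quotient ring $S=R/I$'', which is the claimed statement. For (B) and (C) I would use that $S=R/I$ is a domain exactly when $I$ is prime: Theorem~(B) together with the linear-type characterization gives $\gldim(R)\leq 2\Leftrightarrow H_2(R,S,\cdot)=0$ for every quotient domain $S=R/I$, and Theorem~(C) together with the syzygetic characterization gives $\gldim(R)\leq 3\Leftrightarrow H_2(R,S,S)=0$ for every quotient domain $S=R/I$. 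Since both the Theorem and the two characterizations are biconditional, both implications in each item come for free.

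I expect the substance of the argument to lie entirely in the two homological characterizations rather than in any new computation: once they are cited, the Corollary is a translation. The remaining points require care but not difficulty—matching the coefficient conventions (all coefficient modules $M$ in (B), reflecting linear type, versus the single self-coefficient $H_2(R,S,S)$ in (C), reflecting the weaker syzygetic condition) and checking that ``prime ideal'' corresponds precisely to ``quotient domain''. The one genuinely delicate asymmetry, namely why in (A) the weaker hypothesis ``$H_2(R,S,S)=0$ for all $S$'' already forces the stronger ``$H_2(R,S,\cdot)=0$ for all $S$'' whereas no such collapse occurs in (B)/(C), need not be reproved here: it is absorbed into Theorem~(A), where it appears as the equivalence ``every ideal syzygetic $\Leftrightarrow$ every ideal of linear type''.
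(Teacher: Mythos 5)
Your dictionary for the syzygetic condition is correct and matches the paper: $H_2(R,R/I,R/I)\cong\ker(\alpha_2)$, so $I$ is syzygetic exactly when $H_2(R,S,S)=0$, and this does dispose of the third condition in (A) and of item (C) by pure translation from the Theorem. The gap is in your other dictionary entry. It is \emph{not} true that $I$ is of linear type if and only if $H_2(R,S,\cdot)=0$. The correct statement (\cite[Th\'eor\`eme~4.2]{planasMan}, and the one the paper actually invokes) is that $H_2(R,R/I,\cdot)=0$ if and only if $I$ is of linear type \emph{and} $I/I^2$ is a flat $R/I$-module; in the Noetherian local case this is even equivalent to $I$ being generated by a regular sequence, which is strictly stronger than linear type. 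For a concrete failure of your equivalence: the defining prime $P$ of a monomial space curve such as $(t^3,t^4,t^5)$ in a three-dimensional regular local ring is a three-generated height-two perfect prime, of linear type, but $P/P^2$ needs three generators while having rank two over the one-dimensional domain $R/P$, so it is not flat and $H_2(R,R/P,\cdot)\neq 0$. Consequently your argument for the forward implications of the first equivalence in (A) and of item (B) breaks down: from $\gldim(R)\leq 2$ and Theorem~(B) you obtain that every prime is of linear type, but that alone does not give $H_2(R,S,\cdot)=0$. (The reverse implications are unaffected, since the vanishing does imply linear type.)

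The missing step is short but is genuine extra content, and it is exactly what the paper supplies: if $\gldim(R)\leq 1$, every nonzero ideal is locally principal, so its conormal module is $R/I$-flat; and if $\gldim(R)\leq 2$, every nonzero prime is either locally principal or maximal, so again $I/I^2$ is $R/I$-flat. Combining this flatness with the linear-type conclusion of Theorem~(A), respectively Theorem~(B), yields $H_2(R,S,\cdot)=0$ via the two-part characterization above. You should add this verification; without it the ``only if'' halves of (A) (first equivalence) and (B) are unproved.
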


Note that unlike Theorem~(B), Corollary~(B) could be deduced directly
from \cite[Corollary~2.7]{gop}, since the vanishing of the second
Andr\'e-Quillen homology $H_2(R,R/I,\cdot)$, in the Noetherian local
case, is equivalent to $I$ being generated by an $R$-regular
sequence. We give here a slightly different approach.

\begin{proof}[Proof of the Corollary]
The equivalence between the first and the third condition in
Corollary~{(A)}, follows immediately from the isomorphism
$H_2(R,R/I,R/I)\cong\ker(\alpha_2)$ and the corresponding equivalence
between the first and the third condition in Theorem~{(A)} (see, e.g.,
\cite[Corollaire~3.2]{planasMan}). Similary, Corollary~{(C)} follows
immediately from this same isomorphism and Theorem~{(C)}.

It remains to prove the first equivalence of Corollary~(A) and the
equivalence of Corollary~(B). To this end, recall that the vanishing
of $H_2(R,R/I,\cdot)$ is also equivalent to $I$ being of linear type
and $I/I^2$ being a flat $R/I$-module (see
\cite[Th\'eor\`eme~4.2]{planasMan}). Clearly, this characterization
together the corresponding ``if'' implications in Theorem~(A) and (B),
show the ``if'' implications of Corollary~(A) and (B),
respectively. Finally, note that, as said before, if $\gldim(R)\leq
1$, then every nonzero ideal $I$ of $R$ is locally principal, hence
its conormal module $I/I^2$ is $R/I$-flat. Similarly, if
$\gldim(R)\leq 2$, any nonzero prime ideal $I$ of $R$ is either
locally principal, or else maximal, hence in any case, its conormal
module $I/I^2$ is again $R/I$-flat.
\end{proof}

\begin{remark*}
If we omit the Noetherian assumption on the ring $R$, we know that
Theorem~(A) is true once we substitute $\gldim(R)$ for $\wdim(R)$
(cf. \cite{planasPams}). Note that $\wdim(R)$ can be strictly smaller
than $\gldim(R)$, for instance, if $R$ is the ring of all algebraic
integers (see, e.g., \cite[1.3~Examples]{vasconcelos}). Therefore the
``if'' implication of Theorem~(A), without the Noetherian hypothesis,
is false. This suggests that one should also replace $\gldim(R)$ for
$\wdim(R)$ in the ``if'' implication of Theorem~(B) and (C).

Just to have a flavour of the ins and outs of the non Noetherian
setting, and to start with, we show the following simpler statement:
\begin{equation}\label{nnoeth}
\text{\em If $R$ is a commutative ring of $\gldim(R)\leq 2$, then
  every prime ideal of $R$ is of linear type.}ç
\end{equation}
Indeed, since the linear type condition is local, we can suppose again
that $(R,\mfm)$ is local. Then $R$ is either a regular local ring (of
$\krulldim(R)\leq 2$), a valuation domain, or a so-called umbrella
ring (see \cite[2.2~Theorem]{vasconcelos}, for the definitions and a
proof). Concretely, it is shown that $R$ is a GCD domain with every
prime ideal different from the maximal being flat, hence of linear
type (see, e.g., \cite[Remarque~2.6]{planasMan}). As for the maximal
ideal $\mfm$, it is shown that it is either principal, generated by
two elements, or non finitely generated. In the first case, $\mfm$ is
of linear type whereas in the last case, it is shown that $R$ is a
valuation domain, hence $\wdim(R)\leq 1$, and so every ideal is of
linear type, in particular, $\mfm$ is also of linear type. Finally, if
$\mfm$ is generated by two elements, $a,b$, say, then there exists an
exact sequence $0\to R\xrightarrow{\varphi}
R^2\xrightarrow{\psi}\mfm\to 0$, with $\varphi(1)=(\alpha,\beta)$,
say, $\alpha,\beta\in R$, with $\gcd(\alpha,\beta)=1$, and
$\psi(u,v)=ua+vb$. Since $(b,-a)\in\ker(\psi)$, then there exists
$\delta\in R$, such that $a=-\delta\beta$ and $b=\delta\alpha$. Note
that $\alpha,\beta\in\mfm$, otherwise, if for instance $\alpha$ is
invertible, then $\delta=\alpha^{-1}b$ and
$a=-\delta\beta=(-\alpha^{-1}\beta)b$ and $\mfm$ would be principal.
Hence $\mfm=(a,b)R=\delta(\alpha,\beta)R
\subseteq(\alpha,\beta)R\subseteq\mfm$, and $\mfm=(\alpha,\beta)$ is
generated by the regular sequence $\alpha,\beta$, in particular,
$\mfm$ is of linear type.

We do not know whether one can substitute $\gldim(R)\leq 2$ by
$\wdim(R)\leq 2$ in \eqref{nnoeth}; neither we know if the converse of
\eqref{nnoeth} is true, even if we replace $\gldim(R)\leq 2$ by
$\wdim(R)\leq 2$. This could be a line of enquiry in future work. 
\end{remark*}

\section*{Acknowledgement} 
It is a pleasure to thank the conversations with Jos\'e M. Giral,
Javier Majadas and Bernat Plans about this subject. Most of the
calculations in this note have been done with the inestimable help of
{\sc Singular}.

{\small

}

{\footnotesize\sc

\noindent Departament de Matem\`atiques, Universitat Polit\`ecnica de
Catalunya. \newline Diagonal 647, ETSEIB, E-08028 Barcelona, Catalunya.
Email: {\em francesc.planas@upc.edu} }


\begin{thebibliography}{cc}
\bibitem{andre}{M. Andr\'e, Homologie des alg\`ebres commutatives. Die
  Grundlehren der mathematischen Wissenschaften, Band
  206. Springer-Verlag, Berlin-New York, 1974.}
\bibitem{bresinsky}{H. Bresinsky, On prime ideals with generic zero
  $x_i=t^{n_i}$. Proc. Amer. Math. Soc. 47 (1975), 329-332.}
\bibitem{bh}{W. Bruns, J. Herzog, Cohen-Macaulay rings. Cambridge
  Studies in Advanced Mathematics, 39. Cambridge University Press,
  Cambridge, 1993.}
\bibitem{dgps}{W. Decker, G.M. Greuel, G. Pfister, H. Sch{\"o}nemann,
  \newblock {\sc Singular} {4-1-2} --- {A} computer algebra system for
  polynomial computations.  \newblock {http://www.singular.uni-kl.de}
  (2019).}  
\bibitem{gop}{S. Goto, L. O'Carroll, F. Planas-Vilanova, Sally's
  question and a conjecture of Shimoda. Nagoya Math. J. 211 (2013),
  137-161.}
\bibitem{hsv}{J. Herzog, A. Simis, W.V. Vasconcelos, Koszul homology
  and blowing-up rings. Commutative algebra (Trento, 1981),
  pp. 79-169, Lecture Notes in Pure and Appl. Math., 84, Dekker, New
  York, 1983.}
\bibitem{huneke}{C. Huneke, Hilbert functions and symbolic
  powers. Michigan Math. J. 34 (1987), no. 2, 293-318.}
\bibitem{lam}{T.Y. Lam, Lectures on modules and rings. Graduate Texts
  in Mathematics, 189. Springer-Verlag, New York, 1999.}
\bibitem{mr}{J. Majadas, A.G. Rodicio, Smoothness, regularity and
  complete intersection. London Mathematical Society Lecture Note
  Series, 373. Cambridge University Press, Cambridge, 2010.}
\bibitem{matsumura}{H. Matsumura, Commutative ring theory. Translated
  from the Japanese by M. Reid. Cambridge Studies in Advanced
  Mathematics, 8. Cambridge University Press, Cambridge, 1986.}
\bibitem{planasMan}{F. Planas-Vilanova, Sur l'annulation du deuxi\`eme
  foncteur de (co)homologie d'Andr\' e-Quillen. Manuscripta Math. 87
  (1995), no. 3, 349-357.}
\bibitem{planasPams}{F. Planas-Vilanova, Rings of weak dimension one and
  syzygetic ideals. Proc. Amer. Math. Soc. 124 (1996), no. 10,
  3015-3017.}
\bibitem{planasCamb}{F. Planas-Vilanova, On the module of effective
  relations of a standard algebra. Math. Proc. Cambridge
  Philos. Soc. 124 (1998), no. 2, 215-229.}
\bibitem{quillen}{D. Quillen, On the (co-) homology of commutative
  rings. 1970 Applications of Categorical Algebra (Proc. Sympos. Pure
  Math., Vol. XVII, New York, 1968) pp. 65-87. Amer. Math. Soc.,
  Providence, R.I.}
\bibitem{sh}{I. Swanson, C. Huneke, Integral closure of ideals, rings,
  and modules. London Mathematical Society Lecture Note Series,
  336. Cambridge University Press, Cambridge, 2006. }
\bibitem{vasconcelos}{W.V. Vasconcelos, The rings of dimension two. 
Lecture Notes in Pure and Applied Mathematics, Vol. 22. 
Marcel Dekker, Inc., New York-Basel, 1976.}
\end{thebibliography}
\end{document}